\documentclass[11pt]{article}

\usepackage[margin=1.05in]{geometry}
\usepackage[T1]{fontenc}
\usepackage[utf8]{inputenc}
\usepackage{lmodern}
\usepackage{amsmath,amssymb,amsthm,mathtools}
\usepackage{booktabs}
\usepackage{graphicx}
\usepackage{microtype}
\usepackage{array}
\newcolumntype{L}[1]{>{\raggedright\arraybackslash}p{#1}}
\usepackage{enumitem}
\usepackage{xcolor}
\usepackage{hyperref}
\usepackage{url}

\hypersetup{
  colorlinks=true,
  linkcolor=blue!55!black,
  citecolor=blue!55!black,
  urlcolor=blue!55!black
}

\theoremstyle{plain}
\newtheorem{theorem}{Theorem}[section]
\newtheorem{proposition}[theorem]{Proposition}
\newtheorem{lemma}[theorem]{Lemma}
\newtheorem{corollary}[theorem]{Corollary}

\theoremstyle{definition}
\newtheorem{definition}[theorem]{Definition}

\newtheorem{finding}[theorem]{Computational finding}

\theoremstyle{remark}
\newtheorem{remark}[theorem]{Remark}

\newcommand{\Q}{\mathbb{Q}}
\newcommand{\Z}{\mathbb{Z}}

\newcommand{\dd}{\,d}
\newcommand{\floor}[1]{\left\lfloor #1\right\rfloor}
\newcommand{\ceil}[1]{\left\lceil #1\right\rceil}

\title{Tail Criteria, No-Go Audits, and Ap\'ery-Type Certificate Obstructions for the Irrationality of $e+\pi$}

\author{Runlong Yu\\
  The University of Alabama, Tuscaloosa, AL, USA\\
  \texttt{ryu5@ua.edu}}
\date{}

\begin{document}
\maketitle

\begin{abstract}
The irrationality of $e+\pi$ remains open, despite the separate transcendence of $e$ and $\pi$.  This paper studies the problem from the viewpoint of finite irrationality certificates and formulates a bounded \emph{no-go audit} for low-complexity Ap\'ery-type proof mechanisms.  The first part gives exact equivalences between the rationality hypothesis $e+\pi\in\Q$ and eventual arithmetic phenomena in factorial expansions: a ceiling recurrence, a factorial-Cantor digit condition, and a divisibility criterion.  These equivalences show precisely what rationality would force, but also explain why such tail criteria are not by themselves finite obstructions.  The second part formulates an Ap\'ery-type certificate framework based on integer linear forms
\[
        L_n=A_n(e+\pi)+B_n,\qquad A_n,B_n\in\Z,
        \qquad 0<|L_n|\to0.
\]
A mixed integration-by-parts identity produces a natural family of such forms from integer polynomials.  We then audit several low-complexity constructions, including mixed Pad\'e approximation, crossed separate approximations to $e$ and $\pi$, simple $J$-fractions, holonomic ansatzes, Rodrigues-type polynomial families, and an integer kernel-lattice search.  The main contribution is not an unsuccessful proof attempt, but a rigid boundary probe: a set of no-go filters that mark a forbidden zone where analytic smallness is destroyed by denominator clearing, coefficient growth, primitive reduction, or continued-fraction shadows.  In the final kernel-lattice audit, $145$ raw candidates reduce to $133$ primitive records; the degree-wise data show that the best signals are dominated by continued-fraction shadows, while non-CF candidates do not form a degree-continuing family.  Thus, within the tested low-complexity families, no non-circular Ap\'ery-type mechanism for $e+\pi$ is found.  The paper contributes rigorous tail criteria, a mixed-kernel certificate identity, and an explicit no-go audit framework for future attempts.
\end{abstract}

\section{Introduction}

The problem
\begin{equation}\label{eq:main-problem}
        e+\pi\notin\Q
\end{equation}
is a small-looking instance of a much larger difficulty in transcendental number theory \cite{Baker1975,Lang1966}.  The numbers $e$ and $\pi$ are separately transcendental, by the classical theorems of Hermite and Lindemann \cite{Hermite1874,Lindemann1882}, but their separate transcendence does not rule out a rational relation between them.  In particular, the sum of two transcendental numbers may be rational, and the known proofs of the transcendence of $e$ and of $\pi$ do not interact in a way that excludes cancellation in $e+\pi$.

This paper does not prove \eqref{eq:main-problem}.  Its purpose is more structural.  We isolate the arithmetic requirements that any elementary Ap\'ery-type proof would have to satisfy, in the tradition of integer-linear-form proofs such as Ap\'ery's theorem and Beukers' integral reformulation \cite{Apery1979,Beukers1979}, prove several exact criteria forced by the rationality hypothesis, and test a number of natural finite certificate mechanisms against those requirements.

The resulting contribution should be read as a bounded no-go theorem for a specified low-complexity universe, not as a universal impossibility theorem.  In this paper, a ``no-go'' statement means the following: a proposed mechanism is ruled out as an Ap\'ery-type certificate generator once its denominator-cleared or primitive integer linear forms fail to tend to zero, or once its strongest outputs reduce to continued-fraction shadows without a degree-continuing non-CF family.  This changes the interpretation of the computations.  They are not failed proof attempts; they delineate a tested safe zone and a tested forbidden zone for low-complexity Ap\'ery-type strategies.

The guiding point is that numerical approximation is not enough.  A sequence of rational approximations $B_n/A_n\to -(e+\pi)$ proves irrationality only when it arises from integer linear forms that are both nonzero and genuinely small:
\begin{equation}\label{eq:integer-linear-form-intro}
        L_n=A_n(e+\pi)+B_n,\qquad A_n,B_n\in\Z,
        \qquad 0<|L_n|\to0.
\end{equation}
If $e+\pi=a/b\in\Q$, then
\[
        bL_n=A_na+bB_n\in\Z.
\]
For every nonzero $L_n$ this gives $|bL_n|\ge1$, contradicting $L_n\to0$.  Thus the arithmetic size of the integer linear form, not merely the decimal closeness of $-B_n/A_n$, is the central object.

\subsection{Algebraic context}

The elementary algebraic statement closest to \eqref{eq:main-problem} is the following.

\begin{proposition}\label{prop:at-least-one}
At least one of $e+\pi$ and $e\pi$ is transcendental.
\end{proposition}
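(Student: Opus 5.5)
The plan is the classical Vieta argument. We argue by contradiction and suppose that both $s=e+\pi$ and $p=e\pi$ are algebraic. Then $e$ and $\pi$ are the two roots of the quadratic polynomial
\[
        X^2-sX+p=(X-e)(X-\pi),
\]
whose coefficients $s$ and $p$ lie in the field $\overline{\Q}$ of algebraic numbers. The key structural input is that $\overline{\Q}$ is algebraically closed: a root of a nonconstant polynomial with algebraic coefficients is itself algebraic. We would justify this in the standard way. If $\alpha$ satisfies $X^2-sX+p=0$, then $\alpha$ is algebraic over the field $\Q(s,p)$. That field is a finite extension of $\Q$, since $s$ and $p$ are each algebraic. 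By the tower law, $[\Q(s,p,\alpha):\Q]$ is finite, so $\alpha$ is algebraic over $\Q$.

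Applying this to the root $\alpha=\pi$ shows that $\pi$ is algebraic. This contradicts Lindemann's theorem \cite{Lindemann1882}. (The root $e$ would equally give a contradiction with Hermite's theorem \cite{Hermite1874}, and either one suffices.) Hence $s$ and $p$ cannot both be algebraic, which is exactly the statement of Proposition~\ref{prop:at-least-one}.

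An alternative route avoids the tower law. If $s$ and $p$ are algebraic, then so is the discriminant $s^2-4p=(e-\pi)^2$. Its square root $e-\pi$ is then algebraic, because it is a root of $X^2-(s^2-4p)$. It follows that $e=\tfrac12\bigl(s+(e-\pi)\bigr)$ is algebraic, using only that algebraic numbers are closed under sums, products, and square roots.

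There is no serious obstacle here. The only step needing care is the closure property of $\overline{\Q}$, namely that it is a field and is closed under taking roots of polynomials with algebraic coefficients. This is textbook material, and we would cite it rather than reprove it. It is also worth remarking after the proof that the argument gives no information about which of $e+\pi$ and $e\pi$ is transcendental. This is why it does not bear on \eqref{eq:main-problem}.
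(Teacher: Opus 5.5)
Your proof is correct and is the same Vieta argument the paper uses: if $e+\pi$ and $e\pi$ were both algebraic, then $e$ and $\pi$ would be roots of $X^2-(e+\pi)X+e\pi$, hence algebraic, contradicting Hermite or Lindemann. The paper takes for granted that a root of a polynomial with algebraic coefficients is algebraic. Your justification of that step, via the tower law or via the discriminant, only fills in this standard fact.
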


\begin{proof}
Let $S=e+\pi$ and $P=e\pi$.  If both $S$ and $P$ were algebraic, then $e$ and $\pi$ would be the roots of
\[
        X^2-SX+P=0.
\]
Hence both $e$ and $\pi$ would be algebraic, contradicting the transcendence of either one.  Therefore $S$ and $P$ cannot both be algebraic.
\end{proof}

This proposition does not identify which of $S$ and $P$ is transcendental.  The modern period viewpoint gives one broad language for such independence phenomena \cite{KontsevichZagier2001}.  Strong conjectures such as Schanuel's conjecture would imply much more; for instance, applying Schanuel's conjecture to $1$ and $i\pi$ would imply the algebraic independence of $e$ and $\pi$, and that independence would immediately imply the transcendence of $e+\pi$.  The present problem is therefore best viewed as a weak local shadow of a much deeper independence question.

\subsection{Main results and structure}

The paper has three mathematical components and one experimental no-go audit.

First, we prove exact equivalences between the hypothesis $e+\pi\in\Q$ and eventual behavior in factorial arithmetic.  In particular, if
\[
        A_N=\ceil{N!\pi},
\]
then $e+\pi\in\Q$ is equivalent to the eventual recurrence
\begin{equation}\label{eq:intro-ceiling-recurrence}
        A_{N+1}=(N+1)A_N-1.
\end{equation}
Equivalently, the factorial-Cantor digits of $\pi$ eventually satisfy $d_N(\pi)=N-2$, and an associated sequence $Q_N$ is eventually divisible by $N$.

Second, we formulate the finite certificate criterion \eqref{eq:integer-linear-form-intro}.  This separates genuine irrationality certificates from tail fingerprints and from ordinary rational approximation.

Third, we derive a mixed kernel identity.  For $P\in\Z[x]$, set
\begin{equation}\label{eq:SP-def-intro}
        S_P(x)=\sum_{j\ge0}(-1)^j P^{(j)}(x),
        \qquad A(P)=S_P(1).
\end{equation}
Only finitely many derivatives occur.  Then
\begin{equation}\label{eq:intro-kernel-identity}
        \int_0^1\left(P(x)e^x+\frac{4A(P)}{1+x^2}\right)\dd x
        =A(P)(e+\pi)-S_P(0).
\end{equation}
Thus integer polynomials produce integer linear forms in $e+\pi$.  The remaining problem is to make the integral small without losing arithmetic control.

Fourth, we treat the final bounded searches as a no-go audit.  The point is not merely that no certificate was found, but that the tested mechanisms fail for identifiable structural reasons.  In particular, the final kernel-lattice data visualize the competition between residual precision and denominator growth as the polynomial degree increases; see Figure~\ref{fig:round8-degree-audit}.

The computational part of the paper is deliberately stated as evidence inside bounded search spaces, not as a universal impossibility theorem.  Its conclusion is that the tested low-complexity families fail for a common reason: analytic cancellation can be achieved, but after denominator clearing, primitive reduction, or continued-fraction auditing, the resulting forms do not give a non-circular Ap\'ery-type sequence.

For clarity, only Propositions, Lemmas, Theorems, and Corollaries below are used as mathematical assertions with proofs.  Statements labelled as computational findings summarize finite searches and should be read as reproducible audit records rather than universal negative results.

\section{The Ap\'ery-type certificate criterion}

We begin by fixing the certificate standard used throughout the paper.  The formulation abstracts the decisive integer-linear-form feature of classical Ap\'ery-type arguments and their modern experimental variants \cite{Apery1979,Beukers1979,ChamberlandStraub2020,DoughertyBlissKoutschanZeilberger2022}.

\begin{definition}[Ap\'ery-type certificate for $e+\pi$]\label{def:apery-certificate}
A sequence $(A_n,B_n)\in\Z^2$ is an \emph{Ap\'ery-type certificate sequence} for $e+\pi$ if
\[
        L_n=A_n(e+\pi)+B_n
\]
satisfies
\[
        0<|L_n|\to0.
\]
\end{definition}

\begin{lemma}[Certificate lemma]\label{lem:certificate}
If there exists an Ap\'ery-type certificate sequence for $e+\pi$, then $e+\pi$ is irrational.
\end{lemma}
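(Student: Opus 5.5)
The plan is a proof by contradiction that formalizes the computation already sketched in the introduction. Fix an Ap\'ery-type certificate sequence $(A_n,B_n)\in\Z^2$ as in Definition~\ref{def:apery-certificate}, so that $L_n=A_n(e+\pi)+B_n$ satisfies $0<|L_n|$ for every $n$ and $L_n\to0$. Suppose, for contradiction, that $e+\pi\in\Q$. Since $e+\pi>0$, we may write $e+\pi=a/b$ with $a\in\Z$ and $b\in\Z_{\ge1}$.

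The first step is to clear the denominator. Multiplying $L_n$ by $b$ gives
\[
        bL_n=A_na+bB_n\in\Z,
\]
because $A_n,B_n,a,b$ are all integers. The second step uses the nonvanishing hypothesis: $L_n\neq0$ and $b\neq0$ give $bL_n\neq0$. A nonzero integer has absolute value at least $1$, so $|L_n|\ge 1/b$ for every $n$.

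The third step uses smallness. Since $L_n\to0$, there is an index $n$ with $|L_n|<1/b$. This contradicts the bound from the second step, so $e+\pi\notin\Q$.

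There is no genuine obstacle here. The only point needing care is that the lower bound $1/b$ is uniform in $n$, because $b$ is fixed once rationality is assumed. That uniformity is exactly what makes $L_n\to0$ incompatible with rationality.

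It is also worth noting that the hypothesis $L_n\neq0$ is essential. Without it, the trivial sequence with $A_n=b$, $B_n=-a$ would give $L_n=0$ and prove nothing. This is why Definition~\ref{def:apery-certificate} requires the strict inequality $0<|L_n|$.
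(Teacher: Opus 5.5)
Your proof is correct and is essentially the paper's own argument: assume $e+\pi=a/b$, note that $bL_n=A_na+bB_n$ is a nonzero integer, deduce the uniform bound $|L_n|\ge 1/b$, and contradict $L_n\to0$. Your extra remarks on why the bound $1/b$ is uniform in $n$ and why $L_n\neq0$ is needed are accurate, but the proof does not depend on them.
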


\begin{proof}
Suppose $e+\pi=a/b\in\Q$, with $a\in\Z$ and $b\in\Z_{>0}$.  Then
\[
        bL_n=A_na+bB_n\in\Z.
\]
Since $L_n\ne0$, the integer $bL_n$ is nonzero, so $|bL_n|\ge1$.  Hence $|L_n|\ge 1/b$ for every $n$, contradicting $L_n\to0$.
\end{proof}

\begin{remark}
This lemma is elementary, but it is the decisive arithmetic filter.  A rational approximation $-B_n/A_n\approx e+\pi$ is not sufficient unless the associated integer linear form $A_n(e+\pi)+B_n$ is itself small.
\end{remark}

\section{Factorial tail criteria for the rationality hypothesis}

This section proves exact eventual criteria for the hypothesis $e+\pi\in\Q$.  The criteria are useful because they identify the tail behavior that rationality would force; they are not, however, finite proofs of irrationality.

\subsection{The factorial tail of \texorpdfstring{$e$}{e}}

Let
\[
        E_N=\sum_{k=0}^N \frac1{k!},
        \qquad R_N=e-E_N,
        \qquad \rho_N=N!R_N.
\]
Then
\begin{equation}\label{eq:rho-expansion}
        \rho_N=\frac1{N+1}+\frac1{(N+1)(N+2)}
        +\frac1{(N+1)(N+2)(N+3)}+\cdots,
\end{equation}
and therefore
\begin{equation}\label{eq:rho-bound}
        0<\rho_N<1.
\end{equation}
Moreover,
\begin{equation}\label{eq:rho-recurrence}
        \rho_{N+1}=(N+1)\rho_N-1.
\end{equation}
This recurrence is the source of all three criteria below.

\subsection{A ceiling recurrence}

Define
\[
        A_N=\ceil{N!\pi}.
\]
Since $\pi$ is irrational \cite{Niven1947}, $N!\pi$ is never an integer, and hence
\[
        A_N-N!\pi\in(0,1).
\]

\begin{theorem}[Ceiling recurrence criterion]\label{thm:ceiling}
The following statements are equivalent:
\begin{enumerate}[label=\textup{(\roman*)}]
\item $e+\pi\in\Q$;
\item there exists $N_0$ such that
\begin{equation}\label{eq:ceiling-recurrence}
        A_{N+1}=(N+1)A_N-1
\end{equation}
for every $N\ge N_0$.
\end{enumerate}
\end{theorem}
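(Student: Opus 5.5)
Set $\delta_N=A_N-N!\pi\in(0,1)$; this lies strictly inside the interval because $\pi$ is irrational. The whole argument will compare the fractional offsets $\delta_N$ with the $e$-tail quantities $\rho_N=N!R_N$, using \eqref{eq:rho-bound} and the recurrence \eqref{eq:rho-recurrence}.

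\textbf{Direction (i)$\Rightarrow$(ii).} Suppose $e+\pi=a/b$ with $b>0$. For $N\ge b$ the number $N!(e+\pi)$ is an integer, and $N!E_N$ is always an integer. Hence
\[
N!\pi=N!(e+\pi)-N!E_N-\rho_N
\]
is an integer minus $\rho_N$. Since $\rho_N\in(0,1)$, it follows that $\delta_N=\rho_N$, and therefore
\[
A_N=N!\pi+\rho_N=N!(e+\pi)-N!E_N
\]
for all $N\ge b$. Multiply \eqref{eq:rho-recurrence} by nothing further; instead write
\[
A_{N+1}=(N+1)!\pi+\rho_{N+1}=(N+1)\bigl(N!\pi+\rho_N\bigr)-1=(N+1)A_N-1.
\]
This is \eqref{eq:ceiling-recurrence} with $N_0=b$.

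\textbf{Direction (ii)$\Rightarrow$(i).} This is the substantive direction. Subtract $(N+1)!\pi=(N+1)\,N!\pi$ from both sides of \eqref{eq:ceiling-recurrence}. This gives
\[
\delta_{N+1}=(N+1)\delta_N-1\qquad(N\ge N_0),
\]
which is the same recurrence that $\rho_N$ satisfies. Put $\varepsilon_N=\delta_N-\rho_N$. Then $\varepsilon_{N+1}=(N+1)\varepsilon_N$, so
\[
\varepsilon_N=\frac{N!}{N_0!}\,\varepsilon_{N_0}.
\]
Both $\delta_N$ and $\rho_N$ lie in $(0,1)$, so $|\varepsilon_N|<1$ for every $N$. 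This forces $\varepsilon_{N_0}=0$, hence $\delta_{N_0}=\rho_{N_0}$.

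Unwinding the definitions at $N=N_0$ gives
\[
N_0!(e+\pi)=N_0!E_{N_0}+\rho_{N_0}+N_0!\pi=N_0!E_{N_0}+A_{N_0}\in\Z.
\]
So $e+\pi\in\frac{1}{N_0!}\Z\subset\Q$.

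The main obstacle is the growth argument in the second direction. Equality of the two recurrences alone does not identify $\delta_N$ with $\rho_N$. What rules out any discrepancy is that a nonzero difference is multiplied by $(N+1)$ at every step, while both quantities are confined to $(0,1)$. Everything else is bookkeeping.
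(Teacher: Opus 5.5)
Your proof is correct and follows the paper's argument essentially step for step: in the forward direction you identify $\delta_N=\rho_N$ for $N\ge b$. In the converse you show that $\varepsilon_N=\delta_N-\rho_N$ satisfies $\varepsilon_{N+1}=(N+1)\varepsilon_N$ and must vanish because both $\delta_N$ and $\rho_N$ stay in $(0,1)$, and the only cosmetic differences are that you obtain the forward recurrence from \eqref{eq:rho-recurrence} rather than from $(N+1)!E_{N+1}=(N+1)N!E_N+1$ and that you read off integrality of $N_0!(e+\pi)$ at the single index $N_0$.
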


\begin{proof}
Assume first that $e+\pi=r=a/b\in\Q$.  For $N\ge b$,
\[
        N!\pi=N!r-N!E_N-\rho_N.
\]
Both $N!r$ and $N!E_N$ are integers, while $0<\rho_N<1$.  Hence
\[
        A_N=N!r-N!E_N.
\]
Therefore
\[
\begin{aligned}
        A_{N+1}
        &=(N+1)!r-(N+1)!E_{N+1}  \\
        &=(N+1)N!r-(N+1)N!E_N-1 \\
        &=(N+1)A_N-1.
\end{aligned}
\]
This proves the eventual recurrence.

Conversely, suppose \eqref{eq:ceiling-recurrence} holds for all $N\ge N_0$.  Set
\[
        \delta_N=A_N-N!\pi.
\]
Then $0<\delta_N<1$, and the assumed recurrence gives
\[
        \delta_{N+1}=(N+1)\delta_N-1.
\]
By \eqref{eq:rho-recurrence}, the difference $\eta_N=\delta_N-\rho_N$ satisfies
\[
        \eta_{N+1}=(N+1)\eta_N.
\]
If $\eta_{N_0}\ne0$, then $|\eta_N|$ grows factorially, contradicting the boundedness of both $\delta_N$ and $\rho_N$.  Hence $\eta_{N_0}=0$, and consequently $\delta_N=\rho_N$ for all $N\ge N_0$.  Thus
\[
        A_N-N!\pi=N!(e-E_N),
\]
so
\[
        N!(e+\pi)=A_N+N!E_N\in\Z
\]
for all sufficiently large $N$.  This implies $e+\pi\in\Q$.
\end{proof}

\subsection{Factorial-Cantor digits}

Every $x\in[0,1)$ admits a factorial-Cantor expansion
\[
        x=\sum_{n=2}^\infty \frac{d_n}{n!},
        \qquad 0\le d_n\le n-1,
\]
with the usual ambiguity at eventually maximal tails.  Since $\pi$ is irrational, the fractional part of $\pi$ has a non-eventually-terminating expansion.  Write
\[
        \pi=3+\sum_{n=2}^\infty \frac{d_n(\pi)}{n!}.
\]

\begin{lemma}[Digit extraction]\label{lem:digit-extraction}
Let $x$ be irrational, and write its factorial-Cantor expansion using the non-eventually-maximal representative.  If $F_n=\floor{n!x}$, then, for every $n\ge2$,
\[
        d_n(x)=F_n-nF_{n-1}.
\]
\end{lemma}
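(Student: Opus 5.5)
The plan is to reduce everything to one exact formula for $F_n$. Write $x=m+\sum_{k\ge2} d_k/k!$ with $m=\floor{x}$ and $d_k=d_k(x)$, using the non-eventually-maximal representative as the statement prescribes. Multiplying by $n!$ splits $n!x$ into two parts:
\[
        n!x=\Bigl(n!\,m+\sum_{k=2}^{n} d_k\frac{n!}{k!}\Bigr)+T_n,
        \qquad T_n=\sum_{k>n} d_k\frac{n!}{k!}.
\]
The bracketed part is an integer, because $k!\mid n!$ for $k\le n$. So the whole lemma comes down to showing $0<T_n<1$. Once that holds,
\[
        F_n=n!\,m+\sum_{k=2}^{n} d_k\frac{n!}{k!}.
\]

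\textbf{Bounding the tail (the main point).} For the upper bound, use $d_k\le k-1$ and the telescoping identity $(k-1)/k!=1/(k-1)!-1/k!$. This gives
\[
        \sum_{k>n}(k-1)\frac{n!}{k!}=n!\cdot\frac1{n!}=1.
\]
Hence $T_n\le1$, with equality only if $d_k=k-1$ for every $k>n$. That eventually maximal tail is excluded by the choice of representative, so $T_n<1$. For the lower bound, $T_n\ge0$ trivially. If $T_n=0$, then all digits beyond $n$ vanish, so $x$ is a finite sum of rationals and hence rational, contradicting irrationality. This step is where both hypotheses are used: irrationality gives strict positivity, and the representative choice gives strict inequality below $1$.

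\textbf{Recovering the digit.} With the formula for $F_n$ in hand, the digit extraction is a direct computation. For $n\ge2$, apply the formula at $n-1$. This is valid for $n-1=1$ as well, where the sum is empty and $F_1=\floor{x}=m$, since the tail argument above works equally for $n=1$. Then
\[
        nF_{n-1}=n!\,m+\sum_{k=2}^{n-1}d_k\frac{n\,(n-1)!}{k!}
        =n!\,m+\sum_{k=2}^{n-1}d_k\frac{n!}{k!}.
\]
Subtracting this from $F_n$ leaves only the $k=n$ term, $d_n\cdot n!/n!=d_n$. This proves $d_n(x)=F_n-nF_{n-1}$. I expect no real obstacle beyond stating the strictness of the tail bounds carefully.
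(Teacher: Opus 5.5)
Your proof is correct and follows essentially the same route as the paper: you split $n!x$ into an integer part plus the tail $T_n$, bound $T_n$ below $1$ via the telescoping maximal-digit sum and the exclusion of eventually maximal tails, and then subtract $nF_{n-1}$. Two small differences: your strict lower bound $T_n>0$ is harmless but unnecessary, since $0\le T_n<1$ already determines the floor, and your explicit check of the case $n-1=1$ (where $F_1=m$) fills in a point the paper leaves implicit.
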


\begin{proof}
It is enough to prove the statement for the fractional part of $x$, since the integer part cancels in $F_n-nF_{n-1}$.  Write $x=m+\sum_{k=2}^\infty d_k/k!$, with $m\in\Z$ and with no eventually maximal tail.  Then
\[
 n!x=n!m+\sum_{k=2}^{n} d_k\frac{n!}{k!}
      +\sum_{k=n+1}^{\infty} d_k\frac{n!}{k!}.
\]
The last sum lies in $[0,1)$: its supremum is obtained from the maximal digit tail and equals
\[
        \sum_{k=n+1}^{\infty}(k-1)\frac{n!}{k!}=1,
\]
but equality would force an eventually maximal tail.  Hence
\[
        F_n=n!m+\sum_{k=2}^{n} d_k\frac{n!}{k!}.
\]
Subtracting $nF_{n-1}$ leaves exactly $d_n$.
\end{proof}

\begin{theorem}[Factorial digit criterion]\label{thm:factorial-digit}
One has $e+\pi\in\Q$ if and only if
\begin{equation}\label{eq:digit-condition}
        d_n(\pi)=n-2
\end{equation}
for all sufficiently large $n$.
\end{theorem}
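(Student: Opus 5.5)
We reduce the statement to Theorem~\ref{thm:ceiling} by rewriting the digits of $\pi$ through the ceiling sequence $A_N=\ceil{N!\pi}$. Since $\pi$ is irrational, $N!\pi\notin\Z$, so $F_N=\floor{N!\pi}=A_N-1$ for every $N$. Lemma~\ref{lem:digit-extraction} then gives, for $n\ge2$,
\[
        d_n(\pi)=F_n-nF_{n-1}=(A_n-1)-n(A_{n-1}-1)=A_n-nA_{n-1}+n-1 .
\]
Hence the digit condition \eqref{eq:digit-condition}, $d_n(\pi)=n-2$, is equivalent to
\[
        A_n=nA_{n-1}-1 .
\]
Setting $N=n-1$, this is exactly the recurrence \eqref{eq:ceiling-recurrence}, $A_{N+1}=(N+1)A_N-1$.

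So \eqref{eq:digit-condition} holding for all $n\ge n_0$ is the same as \eqref{eq:ceiling-recurrence} holding for all $N\ge n_0-1$. Theorem~\ref{thm:ceiling} then gives the equivalence with $e+\pi\in\Q$ in both directions.

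The only point needing care is that the digits $d_n(\pi)$ must be taken in the non-eventually-maximal representation, so that Lemma~\ref{lem:digit-extraction} applies. This is automatic: $\pi$ is irrational, so its factorial-Cantor expansion is unique, and in particular the eventual digit pattern $n-2$ is not the maximal pattern $n-1$, so no ambiguity arises. I also note the sanity check that $0\le n-2\le n-1$, so \eqref{eq:digit-condition} is an admissible digit sequence. Since every step above is an identity between integer sequences, I expect no real obstacle beyond this bookkeeping of the index shift and the floor/ceiling relation.
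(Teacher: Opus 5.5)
Your proposal is correct and matches the paper's proof: it uses $A_n=F_n+1$ (valid since $n!\pi\notin\Z$) and Lemma~\ref{lem:digit-extraction} to turn $d_n(\pi)=n-2$ into $A_n=nA_{n-1}-1$, then applies Theorem~\ref{thm:ceiling}. Your index-shift bookkeeping and your remark on uniqueness of the expansion for irrational $\pi$ are both fine.
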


\begin{proof}
Let
\[
        F_n=\floor{n!\pi}.
\]
By Lemma \ref{lem:digit-extraction},
\[
        d_n(\pi)=F_n-nF_{n-1}.
\]
Moreover, $A_n=F_n+1$, since $n!\pi$ is not an integer.  The recurrence
\[
        A_n=nA_{n-1}-1
\]
is equivalent to
\[
        F_n+1=n(F_{n-1}+1)-1,
\]
and hence to
\[
        F_n=nF_{n-1}+n-2.
\]
By the digit-extraction formula, this is precisely $d_n(\pi)=n-2$.  Therefore the eventual recurrence of Theorem \ref{thm:ceiling} is equivalent to the eventual digit condition.
\end{proof}

The digit condition has a simple interpretation.  The factorial expansion of $e$ is
\[
        e=2+\frac1{2!}+\frac1{3!}+\frac1{4!}+\cdots.
\]
If $d_n(\pi)=n-2$ eventually, then the corresponding digits of $e+\pi$ become $n-1$, the maximal possible digit.  The maximal tail telescopes:
\[
        \sum_{n=N}^\infty \frac{n-1}{n!}
        =\sum_{n=N}^\infty\left(\frac1{(n-1)!}-\frac1{n!}\right)
        =\frac1{(N-1)!}.
\]
Thus an infinite tail becomes a rational carry.

\subsection{A divisibility platform}

Let
\[
        B_N=\floor{N!e}=\sum_{k=0}^N\frac{N!}{k!},
        \qquad Q_N=A_N+B_N.
\]
Then $Q_N/N!$ is a natural rational approximation to $e+\pi$.

\begin{theorem}[Platform divisibility criterion]\label{thm:platform}
One has $e+\pi\in\Q$ if and only if
\begin{equation}\label{eq:platform-divisibility}
        N\mid Q_N
\end{equation}
for all sufficiently large $N$.  Equivalently, the sequence $Q_N/N!$ is eventually constant.
\end{theorem}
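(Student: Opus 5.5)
Our plan is to reduce the statement to Theorem~\ref{thm:ceiling} by relating $N\mid Q_N$ to the ceiling recurrence. Write $B_N=N!E_N$. The key observation is that $B_N=NB_{N-1}+1$ exactly, since $N!E_N=N\cdot(N-1)!E_{N-1}+1$. Hence
\[
        Q_N-NQ_{N-1}=(A_N-NA_{N-1})+1 .
\]
The recurrence \eqref{eq:ceiling-recurrence} at index $N-1$, namely $A_N=NA_{N-1}-1$, is therefore equivalent to $Q_N=NQ_{N-1}$. This in turn is equivalent to $Q_N/N!=Q_{N-1}/(N-1)!$. So eventual validity of \eqref{eq:ceiling-recurrence} is equivalent to $Q_N/N!$ being eventually constant, and Theorem~\ref{thm:ceiling} gives the equivalence of eventual constancy with $e+\pi\in\Q$. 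This already proves the ``equivalently'' clause.

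Next we connect divisibility to the recurrence. If $Q_N=NQ_{N-1}$ eventually, then clearly $N\mid Q_N$ eventually. For the converse we need a size argument. Write $Q_N=N!(e+\pi)+\delta_N-\rho_N$, where $\delta_N=A_N-N!\pi\in(0,1)$ and $\rho_N\in(0,1)$ as in \eqref{eq:rho-bound}. Then
\[
        Q_N-NQ_{N-1}=(\delta_N-\rho_N)-N(\delta_{N-1}-\rho_{N-1}).
\]
Using $\rho_N=N\rho_{N-1}-1$ from \eqref{eq:rho-recurrence}, this simplifies to $\delta_N-N\delta_{N-1}+1$. That is just the integer $A_N-NA_{N-1}+1$, so the difference is an integer lying in $(1-N,\,2)$, i.e.\ in $\{2-N,\dots,1\}$.

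Now suppose $N\mid Q_N$ for all $N\ge N_0$. Then $N\mid Q_N-NQ_{N-1}$, and this integer lies in $\{2-N,\dots,1\}$. The only multiple of $N$ in that range (for $N\ge3$) is $0$. Hence $Q_N=NQ_{N-1}$ for all large $N$, which is the eventual recurrence. Combined with the first paragraph and Theorem~\ref{thm:ceiling}, all three conditions are equivalent.

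The main obstacle is this converse step: divisibility by itself does not obviously force exact proportionality. The plan resolves it by pinning $Q_N-NQ_{N-1}$ into a window of length less than $N$ using the bounds $0<\delta_N,\rho_N<1$. The endpoint bookkeeping needs care: we must check that the open bounds exclude $-N+1$ and $2$, and that the window contains $0$ but not $\pm N$ once $N\ge3$.
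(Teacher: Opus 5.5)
Your proposal is correct and is essentially the paper's argument: both show that $Q_N-NQ_{N-1}$ is an integer in $\{2-N,\dots,1\}$, so divisibility by $N$ forces it to vanish, which gives the eventual recurrence. The only difference is bookkeeping. The paper writes this integer as $d_N(\pi)-(N-2)$ and closes via Theorem~\ref{thm:factorial-digit}, whereas you bound $A_N-NA_{N-1}+1$ using $\delta_N,\delta_{N-1}\in(0,1)$ and close via Theorem~\ref{thm:ceiling}; this also lets you prove the ``equivalently'' clause (eventual constancy $\Leftrightarrow$ rationality) more explicitly than the paper does.
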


\begin{proof}
If $e+\pi=r=a/b\in\Q$, then for $N\ge b$ the calculation in the proof of Theorem \ref{thm:ceiling} gives
\[
        A_N=N!r-B_N.
\]
Thus $Q_N=N!r$, so $Q_N/N!=r$ is eventually constant.  In particular $Q_N=NQ_{N-1}$ for sufficiently large $N$, and hence $N\mid Q_N$.

Conversely, suppose that $N\mid Q_N$ for all sufficiently large $N$.  Since
\[
        B_N=NB_{N-1}+1
\]
and
\[
        A_N-NA_{N-1}=d_N(\pi)-(N-1),
\]
we obtain
\[
        Q_N-NQ_{N-1}=d_N(\pi)-(N-2).
\]
For all sufficiently large $N$ we may assume $N\ge2$.  The right-hand side lies in the interval $[2-N,1]$.  If $N\mid Q_N$, then $N$ divides $Q_N-NQ_{N-1}$, and the only multiple of $N$ in that interval is $0$.  Hence $d_N(\pi)=N-2$ eventually.  The conclusion follows from Theorem \ref{thm:factorial-digit}.
\end{proof}

\begin{corollary}\label{cor:tail-not-finite}
Theorems \ref{thm:ceiling}, \ref{thm:factorial-digit}, and \ref{thm:platform} reduce the rationality hypothesis to eventual tail behavior.  To prove $e+\pi\notin\Q$ from these criteria alone, one would need to prove that the corresponding tail pattern fails infinitely often.
\end{corollary}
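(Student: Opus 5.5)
The corollary is a logical consequence of the three equivalences, so the plan is to unpack it as a short deduction rather than prove anything new. First I will restate the common content of Theorems \ref{thm:ceiling}, \ref{thm:factorial-digit}, and \ref{thm:platform}. Each says that $e+\pi\in\Q$ holds if and only if there is an $N_0$ such that a property $T(N)$ holds for all $N\ge N_0$. Here $T(N)$ is one of three statements: the ceiling recurrence \eqref{eq:ceiling-recurrence}, the digit identity $d_N(\pi)=N-2$, or the divisibility $N\mid Q_N$. The first half of the corollary, \emph{reduction to eventual tail behavior}, is then just a reading of these biconditionals. The right-hand side in each case is an ``eventually always'' statement, and it does not depend on the values of $T(N)$ for any finite initial segment of indices.

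For the second half I will contrapose. By the cited theorems, $e+\pi\notin\Q$ is equivalent to the negation of ``there exists $N_0$ with $T(N)$ for all $N\ge N_0$''. That negation is ``for every $N_0$ there is $N\ge N_0$ with $T(N)$ false''. In other words, $T(N)$ fails for infinitely many $N$. So any proof of irrationality that uses only one of these criteria must establish exactly this infinite-failure statement. For example, it must show that $d_N(\pi)\ne N-2$ infinitely often, or equivalently that $N\nmid Q_N$ infinitely often.

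I will also include a brief remark on why no finite computation suffices. Verifying that $T(N)$ fails for $N\le M$ is consistent with $T(N)$ holding for all $N>M$. The criteria place no bound on $N_0$ in terms of computable data, since $N_0$ depends on the unknown denominator $b$ in the proofs. Hence finitely many failures never contradict the hypothesis $e+\pi\in\Q$.

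No step here is genuinely hard. The only point needing care is the quantifier negation. I also need to make sure the three criteria are stated for the same $N$-range, so that ``fails infinitely often'' has the same meaning for each; this is routine because all three are eventual statements and the theorems already show they are mutually equivalent.
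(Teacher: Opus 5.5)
Your proposal is correct and is essentially what the paper intends. The paper gives no separate proof: it treats the corollary as an immediate reading of the three ``eventually always'' biconditionals, and your quantifier negation (irrationality of $e+\pi$ $\iff$ the tail pattern fails for infinitely many $N$) is exactly that content. Your side remark matches the paper's following remark that these criteria supply no finite obstruction; it says that finitely many failures are consistent with the pattern holding beyond them, because the threshold $N_0$ depends on the unknown denominator.
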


\begin{remark}
Corollary \ref{cor:tail-not-finite} explains why the preceding criteria are not yet proof mechanisms.  They are exact descriptions of what rationality would impose, but they do not provide a finite obstruction to rationality.
\end{remark}

\section{Mixed integral certificates}

The preceding section concerns tail behavior.  We now turn to finite constructions of integer linear forms.  The main identity in this section combines the elementary integral for $e$ with the arctangent integral for $\pi$.

For $P\in\Z[x]$, define
\begin{equation}\label{eq:SP-def}
        S_P(x)=\sum_{j\ge0}(-1)^jP^{(j)}(x),
        \qquad A(P)=S_P(1).
\end{equation}
Since $P$ is a polynomial, the sum is finite.

\begin{lemma}[Integration-by-parts identity]\label{lem:ibp}
For every polynomial $P$,
\begin{equation}\label{eq:ibp-identity}
        \int_0^1 P(x)e^x\dd x=A(P)e-S_P(0).
\end{equation}
\end{lemma}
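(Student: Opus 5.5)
The plan is to show that $S_P(x)e^x$ is an antiderivative of $P(x)e^x$; the identity then follows from the fundamental theorem of calculus. Since $P$ is a polynomial of some degree $d$, we have $P^{(j)}=0$ for $j>d$. Hence $S_P=\sum_{j=0}^{d}(-1)^jP^{(j)}$ is a genuine polynomial, and no convergence question arises.

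The key step is the derivative identity
\[
        S_P(x)+S_P'(x)=P(x).
\]
Differentiating the finite sum termwise gives $S_P'=\sum_{j=0}^{d}(-1)^jP^{(j+1)}=-\sum_{j=1}^{d+1}(-1)^{j}P^{(j)}$. Adding this to $S_P$, every term with $1\le j\le d$ cancels. What remains is the $j=0$ term $P$ and a boundary term proportional to $P^{(d+1)}=0$, so the sum is exactly $P$. Consequently
\[
        \frac{d}{dx}\bigl(S_P(x)e^x\bigr)=\bigl(S_P'(x)+S_P(x)\bigr)e^x=P(x)e^x .
\]

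Integrating from $0$ to $1$ then gives $\int_0^1P(x)e^x\dd x=S_P(1)e-S_P(0)e^0=A(P)e-S_P(0)$, which is \eqref{eq:ibp-identity}. Equivalently, one could obtain the same result by repeated integration by parts, differentiating $P$ and integrating $e^x$ at each step. In that version the alternating signs produce $S_P$, and the process terminates after $d+1$ steps.

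There is no genuine obstacle. The only point requiring care is the sign bookkeeping in the telescoping of $S_P+S_P'$, together with the observation that finiteness of the sum makes termwise differentiation legitimate. As a remark for later use, if $P\in\Z[x]$ then every derivative $P^{(j)}$ has integer coefficients. Hence $A(P)=S_P(1)$ and $S_P(0)$ are integers, which is what makes \eqref{eq:ibp-identity} an integer linear form in $e$.
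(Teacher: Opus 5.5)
Your proof is correct and follows essentially the same route as the paper: both use the antiderivative $e^xS_P(x)$ of $P(x)e^x$ and evaluate it at the endpoints. The only difference is that the paper obtains this antiderivative by repeated integration by parts, while you verify it directly through the telescoping identity $S_P+S_P'=P$, which makes the sign bookkeeping and the termination of the sum explicit.
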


\begin{proof}
Repeated integration by parts gives the antiderivative
\[
        \int P(x)e^x\dd x=e^x\sum_{j\ge0}(-1)^jP^{(j)}(x).
\]
Evaluating between $0$ and $1$ gives \eqref{eq:ibp-identity}.
\end{proof}

Since
\[
        \int_0^1\frac{\dd x}{1+x^2}=\frac\pi4,
\]
we obtain the following mixed certificate identity.

\begin{proposition}[Kernel identity]\label{prop:kernel-identity}
Let $P\in\Z[x]$ and define
\begin{equation}\label{eq:kernel-def}
        K_P(x)=P(x)e^x+\frac{4A(P)}{1+x^2}.
\end{equation}
Then
\begin{equation}\label{eq:kernel-identity}
        \int_0^1K_P(x)\dd x=A(P)(e+\pi)-S_P(0).
\end{equation}
In particular, the integral is an integer linear form in $e+\pi$.
\end{proposition}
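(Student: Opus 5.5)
The plan is to split the integral of $K_P$ into its two summands and evaluate each one separately. By linearity,
\[
        \int_0^1K_P(x)\dd x=\int_0^1P(x)e^x\dd x+4A(P)\int_0^1\frac{\dd x}{1+x^2}.
\]
Both integrands are continuous on $[0,1]$, so the splitting is legitimate.

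\emph{Exponential part.} Lemma~\ref{lem:ibp} gives this term directly as $A(P)e-S_P(0)$. No further work is needed here, because the lemma holds for every polynomial $P$.

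\emph{Arctangent part.} Since $\arctan$ is an antiderivative of $1/(1+x^2)$, we have $\int_0^1\dd x/(1+x^2)=\arctan 1-\arctan 0=\pi/4$. Multiplying by $4A(P)$ turns this term into exactly $A(P)\pi$. This is the reason for the factor $4$ in \eqref{eq:kernel-def}.

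\emph{Combining.} Adding the two pieces gives $A(P)e-S_P(0)+A(P)\pi=A(P)(e+\pi)-S_P(0)$, which is \eqref{eq:kernel-identity}.

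The substantive point is the clause ``in particular'': we must check that $A(P)$ and $S_P(0)$ are integers. For $P\in\Z[x]$, each derivative $P^{(j)}$ again lies in $\Z[x]$. Indeed, differentiating $x^k$ produces $kx^{k-1}$, which has an integer coefficient. The sum defining $S_P$ is finite, since $P^{(j)}=0$ for $j>\deg P$, so $S_P\in\Z[x]$. Evaluating at the integers $0$ and $1$ then gives $S_P(0)\in\Z$ and $A(P)=S_P(1)\in\Z$. Hence the integral has the form $A_n(e+\pi)+B_n$ with $A_n=A(P)$ and $B_n=-S_P(0)$, both integers, which is the shape required in Definition~\ref{def:apery-certificate}.

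I expect no step to be a genuine obstacle. The only thing to be careful about is integrality. The ordinary derivatives of an integer polynomial are integral, unlike the divided derivatives $P^{(j)}/j!$, which would not automatically be integral. Since $S_P$ as defined in \eqref{eq:SP-def} uses the ordinary derivatives, integrality follows at once.
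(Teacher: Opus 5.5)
Your proof is correct and matches the paper's argument: split the integral, apply Lemma~\ref{lem:ibp} to the exponential part and $\int_0^1 \dd x/(1+x^2)=\pi/4$ to the rational part, add, and note that $A(P),S_P(0)\in\Z$ because derivatives of integer polynomials are integer polynomials. One correction to your closing aside: the divided derivatives $P^{(j)}/j!$ of an integer polynomial are also integral, since $x^k\mapsto\binom{k}{j}x^{k-j}$ (the paper uses exactly this fact for its Rodrigues-type families), though your argument does not rely on that remark.
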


\begin{proof}
By Lemma \ref{lem:ibp},
\[
        \int_0^1P(x)e^x\dd x=A(P)e-S_P(0).
\]
Also
\[
        \int_0^1\frac{4A(P)}{1+x^2}\dd x=A(P)\pi.
\]
Adding the two identities gives \eqref{eq:kernel-identity}.  Since $P\in\Z[x]$, both $A(P)$ and $S_P(0)$ are integers.
\end{proof}

A harmless derivative correction can adjust the integer part without changing the analytic nature of the problem.

\begin{corollary}[Derivative corrections]\label{cor:derivative-correction}
If $P,G\in\Z[x]$, then
\begin{equation}\label{eq:derivative-correction}
\int_0^1\left(P(x)e^x+\frac{4A(P)}{1+x^2}+G'(x)\right)\dd x
=A(P)(e+\pi)+G(1)-G(0)-S_P(0).
\end{equation}
\end{corollary}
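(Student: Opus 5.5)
The plan is to combine Proposition~\ref{prop:kernel-identity} with the fundamental theorem of calculus applied to the polynomial $G$. The left-hand side of \eqref{eq:derivative-correction} is a finite sum of three integrable functions on $[0,1]$: $P(x)e^x$, $4A(P)/(1+x^2)$, and the polynomial $G'(x)$. Linearity of the integral therefore splits it as
\[
\int_0^1\left(P(x)e^x+\frac{4A(P)}{1+x^2}\right)\dd x+\int_0^1G'(x)\dd x .
\]

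The steps, in order, are as follows.
\begin{enumerate}[label=\textup{(\arabic*)}]
\item Evaluate the first integral, which is exactly $\int_0^1K_P(x)\dd x$ with $K_P$ as in \eqref{eq:kernel-def}. By \eqref{eq:kernel-identity} it equals $A(P)(e+\pi)-S_P(0)$.
\item Evaluate the second integral. Since $G$ is a polynomial, it is a smooth antiderivative of $G'$, so the fundamental theorem of calculus gives $\int_0^1G'(x)\dd x=G(1)-G(0)$.
\item Add the two values to obtain \eqref{eq:derivative-correction}.
\end{enumerate}

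To justify the word ``harmless'' in the surrounding text, I would also record the integrality. Because $G\in\Z[x]$, both $G(1)$ and $G(0)$ are integers: $G(1)$ is the sum of the coefficients and $G(0)$ is the constant term. The proof of Proposition~\ref{prop:kernel-identity} already shows $S_P(0)\in\Z$ and $A(P)\in\Z$. Hence the right-hand side is still an integer linear form $A(P)(e+\pi)+B$ with $B=G(1)-G(0)-S_P(0)\in\Z$. The coefficient of $e+\pi$ is unchanged, and only the integer part shifts.

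There is no real obstacle here; the only point needing care is bookkeeping of signs in $G(1)-G(0)-S_P(0)$. If anything deserves a remark, it is the limitation rather than the proof. The correction shifts $B$ only by the integer $G(1)-G(0)$, so it cannot alter the analytic size of $|L|$ except by an integer translation. This is consistent with the paper's point that derivative corrections do not change the analytic nature of the problem.
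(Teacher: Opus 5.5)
Your proposal is correct and follows the paper's own proof: apply Proposition~\ref{prop:kernel-identity} to the kernel part, add $\int_0^1 G'(x)\,dx = G(1)-G(0)$ by the fundamental theorem of calculus, and note that $G(1)-G(0)$ is an integer because $G\in\Z[x]$. Your extra remark that the coefficient $A(P)$ is unchanged and only the integer part shifts is a correct gloss on the word ``harmless'' in the paper.
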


\begin{proof}
Add the identity $\int_0^1G'(x)\dd x=G(1)-G(0)$ to Proposition \ref{prop:kernel-identity}.  Since $G\in\Z[x]$, the endpoint difference is an integer.
\end{proof}

\begin{definition}[Kernel certificate family]\label{def:kernel-certificate-family}
A sequence $P_n\in\Z[x]$ is called a \emph{kernel certificate family} if the associated forms
\[
        L(P_n)=A(P_n)(e+\pi)-S_{P_n}(0)
\]
are nonzero and satisfy $L(P_n)\to0$.
\end{definition}

The central question becomes whether one can force $K_{P_n}$ to be small on $[0,1]$ while keeping the arithmetic coefficients under control.

\section{Arithmetic obstruction to analytic fitting}

The identities above reduce the problem to a familiar tension in irrationality proofs.  Analytic constructions often produce small remainders before integerization, but irrationality requires small integer linear forms after all denominators and coefficient growth have been accounted for.  The following elementary lemma records the filter used in the later audit.

\begin{lemma}[Denominator-clearing filter]\label{lem:denominator-clearing}
Let
\[
        \Lambda_n=\alpha_n(e+\pi)+\beta_n,
        \qquad \alpha_n,\beta_n\in\Q,
\]
and let $D_n\in\Z_{>0}$ be such that $D_n\alpha_n,D_n\beta_n\in\Z$.  If the forms $\Lambda_n$ are to prove the irrationality of $e+\pi$ through Lemma \ref{lem:certificate}, then the integer forms
\[
        D_n\Lambda_n=(D_n\alpha_n)(e+\pi)+D_n\beta_n
\]
must be nonzero eventually and must satisfy $D_n|\Lambda_n|\to0$.
\end{lemma}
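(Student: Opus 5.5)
This statement is essentially definitional, so the plan is to unpack what "prove the irrationality of $e+\pi$ through Lemma \ref{lem:certificate}" means and check that the two stated conditions are exactly its hypotheses. Lemma \ref{lem:certificate} applies only to sequences of \emph{integer} pairs $(A_n,B_n)$. The rational forms $\Lambda_n$ therefore have to be converted into integer forms before the lemma can be invoked. The canonical conversion is multiplication by a common denominator $D_n$. I will set $A_n=D_n\alpha_n$ and $B_n=D_n\beta_n$, which are integers by the choice of $D_n$. Then
\[
        L_n=A_n(e+\pi)+B_n=D_n\Lambda_n .
\]

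Next I will verify the conditions of Definition \ref{def:apery-certificate} for $(A_n,B_n)$. Lemma \ref{lem:certificate} requires $0<|L_n|$ and $L_n\to0$. Since $D_n>0$, we have $|L_n|=D_n|\Lambda_n|$, so these requirements translate verbatim into "$D_n\Lambda_n\neq0$" and "$D_n|\Lambda_n|\to0$". Only the eventual behaviour matters, since discarding finitely many terms does not affect either the conclusion or the proof of Lemma \ref{lem:certificate}. This is why the statement says "eventually".

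I will then add a short justification that this is a genuine necessity and not merely a sufficient route. Under the rationality hypothesis $e+\pi=a/b$, the proof of Lemma \ref{lem:certificate} shows that every nonzero integer form has $|L_n|\ge 1/b$. Consequently, if $D_n|\Lambda_n|$ does not tend to zero along the integerized sequence, the argument yields no contradiction. The smallness of $\Lambda_n$ alone is irrelevant, because it can be destroyed by the growth of $D_n$. I will also remark that replacing $D_n$ by any other valid clearing denominator only multiplies $L_n$ by a positive integer ratio relative to the least common denominator. The least common denominator is therefore the most favourable choice, and if the condition fails for it, it fails for every choice.

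The main obstacle is not technical but interpretive. The phrase "through Lemma \ref{lem:certificate}" must be made precise as meaning "the integer sequence fed into the lemma is $(D_n\alpha_n,D_n\beta_n)$". Once that reading is fixed, the proof is a one-line identification of hypotheses.
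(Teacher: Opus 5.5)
Your proposal is correct and follows essentially the same approach as the paper. Both arguments take $(D_n\alpha_n, D_n\beta_n)$ as the integer input to Lemma~\ref{lem:certificate} and use $|D_n\Lambda_n| = D_n|\Lambda_n|$ (since $D_n>0$), so the certificate hypotheses become exactly eventual non-vanishing and $D_n|\Lambda_n|\to0$. Your extra remark is also correct and goes slightly beyond what the paper states: every admissible $D_n$ is a positive integer multiple of the least common denominator, so failure for the least common denominator means failure for every choice.
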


\begin{proof}
Lemma \ref{lem:certificate} applies only to integer linear forms in $e+\pi$.  Clearing denominators gives the displayed integer form.  Its absolute value is exactly $D_n|\Lambda_n|$.  Therefore the required smallness condition for the integer form is precisely $D_n|\Lambda_n|\to0$, together with eventual non-vanishing.

\end{proof}

\begin{corollary}[No-go filter for a proposed certificate mechanism]\label{cor:no-go-filter}
Let a proposed mechanism produce rational forms
\[
        \Lambda_n=\alpha_n(e+\pi)+\beta_n,\qquad \alpha_n,\beta_n\in\Q,
\]
with clearing factors $D_n$, or integer forms $L_n=A_n(e+\pi)+B_n$.  If, after denominator clearing and primitive reduction, the mechanism contains no nonzero subsequence of integer linear forms tending to zero, then it cannot prove $e+\pi\notin\Q$ by the Ap\'ery-type certificate criterion of Lemma~\ref{lem:certificate}.  In particular, a sequence whose strongest primitive outputs are merely continued-fraction shadows, with no independent degree-continuing non-CF improvement, is not a non-circular Ap\'ery-type certificate.
\end{corollary}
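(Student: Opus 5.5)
The plan is to reduce Corollary~\ref{cor:no-go-filter} to Lemma~\ref{lem:denominator-clearing} and Lemma~\ref{lem:certificate}. The key observation is that an Ap\'ery-type certificate in the sense of Definition~\ref{def:apery-certificate} is just a sequence of integer pairs $(A_n,B_n)$ with $0<|L_n|\to0$. Lemma~\ref{lem:certificate} extracts irrationality from such a sequence, and from nothing weaker. So "the mechanism proves $e+\pi\notin\Q$ by Lemma~\ref{lem:certificate}" should be read as: some sequence of integer forms produced by the mechanism, possibly after passing to a subsequence, satisfies Definition~\ref{def:apery-certificate}. I will argue the contrapositive. If the mechanism supplies a certificate, then after clearing and reduction it contains a nonzero subsequence tending to zero.

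First I handle the rational case. If the mechanism outputs $\Lambda_n$ with clearing factors $D_n$, then by Lemma~\ref{lem:denominator-clearing} the only integer forms available to Lemma~\ref{lem:certificate} are $D_n\Lambda_n$. These must be eventually nonzero with $D_n|\Lambda_n|\to0$. This holds along the relevant subsequence, since Lemma~\ref{lem:certificate} needs only infinitely many terms.

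Second, I address primitive reduction. Write $g_n=\gcd(A_n,B_n)\ge1$. The primitive form $L_n/g_n$ satisfies $|L_n/g_n|\le|L_n|$ and is nonzero exactly when $L_n$ is. So if the cleared forms contain a nonzero null subsequence, so do the primitive forms. Conversely, the primitive forms are themselves integer forms. Hence the existence of a nonzero null subsequence is insensitive to reduction, and the hypothesis "no such subsequence after clearing and reduction" contradicts the existence of a certificate. This gives the first assertion.

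The "in particular" clause is the main obstacle, because it is partly definitional rather than deductive. Continued-fraction convergents $p_k/q_k$ of $e+\pi$ do give $0<|q_k(e+\pi)-p_k|\to0$. Such shadows therefore formally satisfy the inequality, but computing them presupposes the digits of $e+\pi$ itself. I would make precise that a \emph{non-circular} certificate must be generated independently of the expansion of $e+\pi$; for instance, by an explicit integral such as Proposition~\ref{prop:kernel-identity}, or by a recurrence with an a priori proof that $L_n\ne0$ and $L_n\to0$. A CF-shadow sequence provides no such a priori proof of nonvanishing and decay: it simply recovers the convergents, whose existence is equivalent to irrationality. Thus invoking it is circular. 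Absent an independent degree-continuing non-CF family, the mechanism supplies no subsequence meeting this strengthened requirement, and the first part applies. I would state this clause explicitly as following from the definition of non-circularity rather than as a new mathematical deduction.
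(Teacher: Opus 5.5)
Your proposal is correct and follows the paper's route. The paper's proof is a direct application of Lemma~\ref{lem:denominator-clearing} and Definition~\ref{def:apery-certificate}. Your contrapositive argument, including the observation that $|L_n/g_n|\le|L_n|$ makes the existence of a nonzero null subsequence invariant under primitive reduction, just spells out what the paper leaves implicit.

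Your reading of the ``in particular'' clause as following from the informal notion of non-circularity, not as a new deduction, also matches the paper, which gives no separate argument for it. One wording point: the assertion that CF-shadows give $0<|q_k(e+\pi)-p_k|$ for all $k$ already presupposes that $e+\pi$ is irrational. That presupposition is exactly the circularity you identify.
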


\begin{proof}
This is a direct application of Lemma~\ref{lem:denominator-clearing} and Definition~\ref{def:apery-certificate}.  The certificate lemma requires nonzero integer forms tending to zero.  If the proposed mechanism does not produce such forms after the necessary arithmetic reductions, it cannot supply the required contradiction to rationality.
\end{proof}

Thus the quantity that matters is not merely $|\Lambda_n|$, but the denominator-cleared size $D_n|\Lambda_n|$.  If the common denominator grows faster than the analytic remainder decays, the construction cannot yield an Ap\'ery-type certificate through these forms.

The same audit applies to integer-polynomial constructions.  When $P\in\Z[x]$, no denominator clearing is necessary in Proposition \ref{prop:kernel-identity}, but a candidate may still be nonprimitive: if
\[
        A(P)(e+\pi)-S_P(0)=g\bigl(A_0(e+\pi)-B_0\bigr),
        \qquad g=\gcd(A(P),S_P(0)),
\]
then the primitive form $A_0(e+\pi)-B_0$ is the relevant arithmetic object.  Apparent smallness caused by a common factor, or by rediscovering an ordinary continued-fraction approximant, is not a new Ap\'ery-type mechanism.  A successful construction must couple the analytic and arithmetic parts: the same structure that makes the remainder small must also enforce integrality, non-vanishing, and primitive smallness.

\section{Low-complexity certificate families}

This section records the outcome of several natural finite constructions, motivated in part by the experimental Ap\'ery-limit and Beukers-integral literature \cite{ChamberlandStraub2020,DoughertyBlissKoutschanZeilberger2022}.  The results in this section are not stated as impossibility theorems.  They are bounded computational findings and structural explanations for why the corresponding low-complexity families did not yield Definition \ref{def:apery-certificate}.

None of the observations below is used to prove a negative theorem about all possible Pad\'e, continued-fraction, holonomic, or lattice constructions.  Their purpose is to identify failure modes that any future certificate must pass: denominator clearing, primitive reduction, and continued-fraction-shadow auditing.

\subsection{Mixed Pad\'e approximation}

Using Machin's formula, a classical source of rapidly convergent formulae for $\pi$ \cite{BorweinBorwein1987},
\[
        \pi=16\arctan\frac15-4\arctan\frac1{239},
\]
set
\[
        T(z)=16\arctan\frac z5-4\arctan\frac z{239},
        \qquad S(z)=e^z+T(z).
\]
Then $S(1)=e+\pi$.  Direct Pad\'e approximation to $S$ gives rational functions whose values at $z=1$ approximate $e+\pi$ well.  However, after clearing denominators, the associated integer linear forms do not become small in the tested diagonal cases.  The observed behavior is consistent with the general obstruction in the previous section: Pad\'e approximation provides analytic closeness, but not an Ap\'ery-type integer certificate.

\subsection{Crossing separate approximations to \texorpdfstring{$e$ and $\pi$}{e and pi}}

Let $p_j/q_j$ approximate $e$, and let $P_k/Q_k$ approximate $\pi/4$ through a Lambert-type arctangent continued fraction.  Then
\begin{equation}\label{eq:crossed-form}
        Q_k(q_je-p_j)+q_j(Q_k\pi-4P_k)
        =q_jQ_k(e+\pi)-(Q_kp_j+4q_jP_k)
\end{equation}
is an integer linear form in $e+\pi$.  The difficulty is synchronization.  The good approximations to $e$ and to $\pi$ arise from different mechanisms, and their errors do not automatically decay on the same scale or with the sign needed to produce \eqref{eq:integer-linear-form-intro}.  In the tested ranges, crossed forms of the type \eqref{eq:crossed-form} did not yield a sequence tending to zero.

\subsection{Simple \texorpdfstring{$J$}{J}-fraction families}

Generalized continued fractions with low-degree polynomial coefficients, for example
\[
        b_n=un+v,
        \qquad a_n=cn^2
        \quad\text{or}\quad
        a_n=cn(n+r),
\]
were also considered.  Some choices produced limits numerically close to $e+\pi$; one family gave a value near
\[
        5.8598208827\ldots,
\]
whereas
\[
        e+\pi=5.8598744820\ldots.
\]
No identity with $e+\pi$ and no associated integer remainder estimate were found.  This illustrates a common risk: a search over simple continued fractions may find nearby constants rather than a certificate for the target constant.

\subsection{Holonomic ansatzes}

Several holonomic variants of Corollary \ref{cor:derivative-correction} were tested, including endpoint-Hermite interpolation, fixed-$P$ derivative corrections, Taylor cancellation, and integer Taylor-nullspace searches.  These methods can reduce local analytic residuals.  Nevertheless, in the rational-coefficient variants the denominator-cleared quantity $D_n|\Lambda_n|$ did not tend to zero, and in the integer variants the primitive forms did not organize into a decreasing nonzero sequence.  Thus ordinary holonomic fitting did not produce an Ap\'ery-type mechanism in the tested families.

\subsection{Rodrigues-type polynomial families}

A more structured class is given by Rodrigues-type polynomials
\begin{equation}\label{eq:rodrigues-family}
        P_n(x)=\frac1{n!}\frac{d^n}{dx^n}\left[x^n(1-x)^nR(x)^n\right],
\end{equation}
where $R\in\Z[x]$ is a low-degree polynomial.  The operator $(1/n!)d^n/dx^n$ preserves integrality of coefficients on integer polynomials, since it sends $x^m$ to $\binom{m}{n}x^{m-n}$.  Such families build in endpoint vanishing and factorial divisibility, which are features of many classical irrationality proofs.  However, for the tested $R$, the corresponding integer linear forms
\[
        A(P_n)(e+\pi)-S_{P_n}(0)
\]
did not tend to zero.  Endpoint divisibility alone is therefore not sufficient; it must be coupled to the mixed $e$--$\pi$ structure in a stronger way.

\section{Kernel-lattice search and continued-fraction shadows}

The most sensitive search used the kernel identity of Proposition \ref{prop:kernel-identity}.  The objective is to find $P\in\Z[x]$ such that
\[
        K_P(x)=P(x)e^x+\frac{4A(P)}{1+x^2}
\]
is small on $[0,1]$.  This is a function-space objective, not an explicit continued-fraction objective.  Nevertheless, the strongest candidates obtained by this method were governed by continued-fraction approximations after primitive reduction.

\subsection{Primitive reduction}

For a nonzero integer linear form
\[
        A\alpha-B,
        \qquad A,B\in\Z,
        \qquad \alpha=e+\pi,
\]
let $g=\gcd(A,B)$ and write
\[
        A_0=A/g,
        \qquad B_0=B/g.
\]
The primitive form is $A_0\alpha-B_0$.  Since
\[
        A\alpha-B=g(A_0\alpha-B_0),
\]
any apparent smallness of the nonprimitive form must be judged after reduction.

\begin{definition}[Continued-fraction shadow]\label{def:cf-shadow}
Let $\alpha=e+\pi$.  A primitive form $A\alpha-B$ is called a \emph{continued-fraction shadow}, or \emph{CF-shadow}, if $B/A$ is a convergent or an intermediate convergent of the continued fraction of $\alpha$.
\end{definition}

CF-shadows are not useless approximations, but they do not by themselves constitute a new Ap\'ery-type mechanism.  They may simply repackage the classical best-approximation structure of the real number $e+\pi$.

\subsection{A degree-\texorpdfstring{$14$}{14} candidate}\label{subsec:degree14}

One degree-$14$ polynomial found by the kernel-lattice search gave
\[
        A(P)=245223,
        \qquad S_P(0)=1436976.
\]
Therefore
\[
        L(P)=245223(e+\pi)-1436976
        \approx 1.11462317066\times10^{-4}.
\]
The sampled kernel was uniformly small on the search grid, with approximate range
\[
        -0.00121336\le K_P(x)\le0.00131354.
\]
At first sight this is a strong mixed-kernel signal.  However,
\[
        245223=9\cdot27247,
        \qquad 1436976=9\cdot159664,
\]
so
\[
        245223(e+\pi)-1436976
        =9\bigl(27247(e+\pi)-159664\bigr).
\]
The rational number
\[
        \frac{159664}{27247}
\]
is a continued-fraction convergent of $e+\pi$.  One way to certify this assertion is to compute the continued-fraction cylinder
\[
        [5;1,6,7,3,21,2,1,2]=\frac{159664}{27247},
        \qquad
        \frac{159664+59759}{27247+10198}=\frac{219423}{37445},
\]
and to verify, by rigorous interval bounds for $e$ and $\pi$, that
\[
        \frac{159664}{27247}<e+\pi<\frac{219423}{37445}.
\]
Thus the candidate is a CF-shadow after primitive reduction.

\subsection{Structured-basis search}

The final search examined several structured bases: Bernstein, shifted Legendre, Rodrigues-derivative, symmetric, endpoint-monomial, and endpoint-Legendre bases.  Each candidate was primitive-reduced and tested against the continued-fraction convergents and intermediate convergents of $e+\pi$.

\begin{finding}\label{find:kernel-lattice}
In the bounded structured-basis searches described by the protocol in Appendix \ref{app:kernel-lattice}, the implemented runs found many non-CF primitive candidates, but no basis family produced simultaneous degree-continuation of both the kernel norm and the primitive error.  The best primitive error returned to the CF-shadow
\[
        27247(e+\pi)-159664.
\]

\end{finding}

The final audit retained $145$ raw candidate records and $133$ classified primitive records.  Figure~\ref{fig:round8-degree-audit} pulls this bookkeeping out of Appendix~\ref{app:data-summary} and displays the main experimental obstruction.  The left panel plots the primitive residuals
\[
        \log_{10}|A_0(e+\pi)-B_0|
\]
against the polynomial degree after primitive reduction and CF-shadow classification.  The right panel restricts to the best non-CF candidate at each degree and compares residual precision with denominator size.  The visual pattern is the negative result: the non-CF denominator size grows, but the residual precision does not organize into an improving degree-continuing family.

\begin{figure}[t]
\centering
\includegraphics[width=0.96\textwidth]{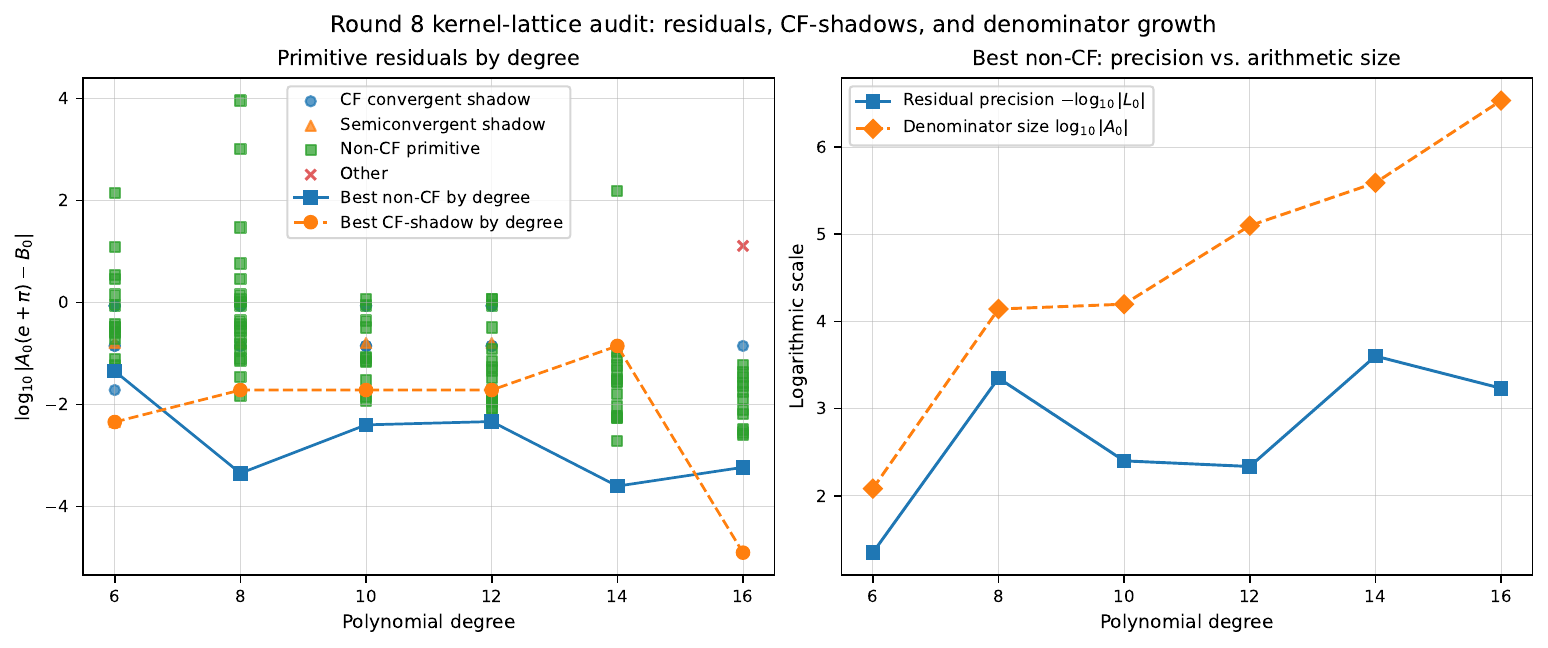}
\caption{Round~8 kernel-lattice audit.  Left: primitive residuals by polynomial degree, separated into CF-convergent shadows, semiconvergent shadows, and non-CF primitive candidates.  The lowest residuals are dominated by CF-shadows.  Right: for the best non-CF candidate at each degree, the residual precision $-\log_{10}|A_0(e+\pi)-B_0|$ is compared with the arithmetic size $\log_{10}|A_0|$.  The denominator grows with degree, but the non-CF precision does not show a monotone Ap\'ery-type improvement.}
\label{fig:round8-degree-audit}
\end{figure}

Examples of non-CF candidates included
\[
        37602(e+\pi)-220343
\]
and
\[
        567898(e+\pi)-3327811.
\]
These candidates did not form a stable improving family as the degree increased.

\begin{remark}
The kernel-lattice method remains mathematically useful as a diagnostic.  Its failure mode is informative: even a function-space objective can recover continued-fraction shadows of $e+\pi$.  Any future kernel-based certificate must therefore pass primitive reduction and CF-shadow testing before it can be regarded as a genuinely new mechanism.
\end{remark}

\section{Comparison of mechanisms}

The following table summarizes the mechanisms considered above and their observed obstruction.

\begin{center}
\renewcommand{\arraystretch}{1.18}
\begin{tabular}{L{0.34\textwidth}L{0.55\textwidth}}
\toprule
\textbf{Mechanism} & \textbf{Observed obstruction} \\
\midrule
Single rational approximation & Can reduce to ordinary continued-fraction fitting. \\
Factorial-Cantor tail criteria & Exact equivalences, but require ruling out eventual tail behavior. \\
Mixed Pad\'e approximation & Analytic approximation improves, but integer linear forms grow after clearing denominators. \\
Crossed $e$ and $\pi$ mechanisms & Separate errors do not synchronize into a small mixed form. \\
Simple $J$-fractions & Searches find nearby constants rather than identities with $e+\pi$. \\
Holonomic ansatzes & Local cancellation does not survive denominator and coefficient growth. \\
Rodrigues-type families & Endpoint divisibility alone does not force mixed Ap\'ery cancellation. \\
Kernel-lattice search & Strongest primitive forms are dominated by CF-shadows. \\
Non-CF degree-continuing families & Not found in the tested structured bases. \\
\bottomrule
\end{tabular}
\end{center}

The common obstruction can be summarized as follows:
\[
        \text{analytic smallness}\quad\not\Rightarrow\quad
        \text{arithmetic smallness}.
\]
A successful Ap\'ery-type proof must combine four properties in the same structure:
\begin{enumerate}[label=\textup{(\alph*)}]
\item integer coefficients;
\item a nonzero remainder;
\item a remainder estimate tending to zero;
\item denominator and coefficient growth controlled strongly enough not to destroy the estimate.
\end{enumerate}
The tested constructions usually achieve one or two of these properties, but not all four simultaneously.

\section{Conclusion}

This paper establishes several exact tail criteria for the rationality of $e+\pi$ and formulates a finite Ap\'ery-type certificate framework for attacking its irrationality.  The kernel identity
\[
        \int_0^1\left(P(x)e^x+\frac{4A(P)}{1+x^2}\right)\dd x
        =A(P)(e+\pi)-S_P(0)
\]
provides a natural mixed source of integer linear forms.  However, the low-complexity mechanisms tested here do not produce a nonzero sequence tending to zero.  Their shared failure is that analytic cancellation and arithmetic integrality are not generated by the same structure.

The resulting lesson is precise.  A proof of $e+\pi\notin\Q$ by Ap\'ery-type means cannot rely on approximation quality alone.  It must produce integer linear forms whose smallness survives denominator clearing, coefficient growth, primitive reduction, and continued-fraction auditing.  Within the tested families, no such non-circular mechanism was found.

\appendix

\section{CF-shadow audit protocol}\label{app:cf-shadow}

For a candidate integer linear form $A\alpha-B$, with $\alpha=e+\pi$:
\begin{enumerate}
\item Compute $g=\gcd(A,B)$.
\item Set $A_0=A/g$ and $B_0=B/g$.
\item Compute the continued fraction of $\alpha$ to denominator range exceeding $A_0$.
\item Check whether $B_0/A_0$ is a convergent.
\item If not, check whether $B_0/A_0$ is an intermediate convergent.
\item If either test is positive, label the candidate as a CF-shadow.
\item Otherwise retain it as a non-CF primitive candidate and test degree-continuation.
\end{enumerate}

\section{Kernel-lattice search protocol}\label{app:kernel-lattice}

For a structured polynomial basis $\{\phi_j\}_{j=0}^d$:
\begin{enumerate}
\item Write $P(x)=\sum_{j=0}^d c_j\phi_j(x)$ with $c_j\in\Z$.
\item Compute $A(P)=\sum_{r\ge0}(-1)^rP^{(r)}(1)$.
\item Form $K_P(x)=P(x)e^x+4A(P)/(1+x^2)$.
\item Sample $K_P$ on a grid in $[0,1]$ and build a lattice objective for small sampled values.
\item Use lattice reduction to obtain candidate integer coefficient vectors $c$.
\item For each candidate, compute the exact integer linear form
\[
        L(P)=A(P)(e+\pi)-S_P(0).
\]
\item Primitive-reduce $A(P)$ and $S_P(0)$.
\item Apply the CF-shadow audit from Appendix \ref{app:cf-shadow}.
\item Retain only non-CF candidates and examine degree-continuation.
\end{enumerate}

\section{Data summary from the final kernel-lattice search}\label{app:data-summary}

The final structured-basis search retained $145$ raw candidate records before primitive reduction and classification.  In the classified primitive records, $32$ were CF-convergent shadows, $5$ were semiconvergent shadows, and $95$ were classified as non-CF.  The best primitive error still came from the CF-shadow
\[
        27247(e+\pi)-159664.
\]
The best non-CF candidates did not organize into a degree-continuing family and did not show simultaneous improvement of kernel norm and primitive error.  Figure~
ef{fig:round8-degree-audit} visualizes the degree-wise residual and denominator data extracted from this final audit.  These counts are bookkeeping data from the finite search and are not used in the proofs of the preceding theorems.

\section{Certification of the degree-14 CF-shadow}\label{app:cf-certification}

For completeness we spell out the continued-fraction check used for the degree-$14$ candidate in Section \ref{subsec:degree14}.  The convergents associated with the prefix
\[
        [5;1,6,7,3,21,2,1,2]
\]
include
\[
        \frac{59759}{10198}
        \quad\text{and}\quad
        \frac{159664}{27247}.
\]
With indices starting at the initial partial quotient $a_0=5$, this is the even-index convergent $p_8/q_8$.  The adjacent endpoint of the corresponding continued-fraction cylinder is
\[
        \frac{159664+59759}{27247+10198}=\frac{219423}{37445}.
\]
Therefore the inequalities
\[
        \frac{159664}{27247}<e+\pi<\frac{219423}{37445}
\]
certify that $159664/27247$ is a continued-fraction convergent of $e+\pi$.  Such inequalities can be verified rigorously using the factorial-tail bound for $e$ and any standard Machin-series interval bound for $\pi$; no unproved number-theoretic input is involved.

\end{document}